\documentclass[11pt,a4paper]{article}

\usepackage{inputenc}
\usepackage{amsmath}
\usepackage{bm}
\usepackage{bbold}
\usepackage{amsthm}

\usepackage{hyperref}

\setlength{\mathsurround}{1pt}

\title{A Complete Closed-Form Solution\\ to a Tropical Extremal Problem\thanks{Advances in Computer Science, Proceedings of the 6th WSEAS European Computing Conference (ECC '12), Prague, Czech Republic, September 24-26, 2012, WSEAS Press, 2012, pp.~146--151. (Recent Advances in Computer Engineering Series, Vol.~5)}
}

\author{Nikolai Krivulin\thanks{Faculty of Mathematics and Mechanics, St.~Petersburg State University, 28 Universitetsky Ave., St.~Petersburg, 198504, Russia, 
nkk@math.spbu.ru}
}

\date{}

\newtheorem{theorem}{Theorem}
\newtheorem{lemma}{Lemma}
\newtheorem{corollary}{Corollary}

\begin{document}

\maketitle

\begin{abstract}
A multidimensional extremal problem in the idempotent algebra setting is considered which consists in minimizing a nonlinear functional defined on a finite-dimensional semimodule over an idempotent semifield. The problem integrates two other known problems by combining their objective functions into one general function and includes these problems as particular cases. A new solution approach is proposed based on the analysis of linear inequalities and spectral properties of matrices. The approach offers a comprehensive solution to the problem in a closed form that involves performing simple matrix and vector operations in terms of idempotent algebra and provides a basis for the development of efficient computational algorithms and their software implementation.
\\

\textit{Key-Words:} idempotent semifield, finite-dimensional idempotent semimodule, functional on semimodule, linear inequality, spectrum of matrix, tropical extremal problem
\end{abstract}

\section{Introduction}

The development of solution methods and computational algorithms for solving multidimensional extremal problems is one of the important concerns in the linear tropical (idempotent) algebra \cite{Baccelli92Synchronization,Cuninghamegreen94Minimax,Kolokoltsov97Idempotent,Golan03Semirings,Heidergott05Maxplus,Litvinov07Themaslov,Butkovic10Maxlinear}. The problems under consideration involve the minimization of linear and nonlinear functionals defined on finite-dimensional semimodules over idempotent semifields and may have additional constraints imposed on the feasible solution set in the form of linear tropical equalities and inequalities. Among these problems are idempotent analogues of the linear programming problems \cite{Zimmermann03Disjunctive,Butkovic09Introduction,Butkovic10Maxlinear} and their extensions with nonlinear objective functions \cite{Krivulin05Evaluation,Krivulin06Solution,Krivulin06Eigenvalues,Krivulin09Methods,Krivulin11Algebraic,Krivulin11Analgebraic,Krivulin11Anextremal,Gaubert12Tropical,Krivulin12Anew}. There are solutions to certain problems where both objective function and constraints appear to be nonlinear \cite{Tharwat08Oneclass,Butkovic09Onsome}.

Many extremal problems are formulated and solved only in terms of one idempotent semifield, say the classical semifield $\mathbb{R}_{\max,+}$ in \cite{Butkovic09Introduction,Butkovic10Maxlinear,Gaubert12Tropical}. Some other problems including those considered in \cite{Zimmermann03Disjunctive,Krivulin06Eigenvalues,Krivulin09Methods,Krivulin11Algebraic,Krivulin11Analgebraic,Krivulin11Anextremal,Krivulin12Anew} are treated in a more general setting, which includes the semifield $\mathbb{R}_{\max,+}$ as a particular case. Furthermore, proposed solutions frequently take (see, e.g. \cite{Tharwat08Oneclass,Butkovic09Introduction,Butkovic09Onsome,Butkovic10Maxlinear,Gaubert12Tropical}) the form of an iterative algorithm that produces a solution if any, or indicates that there is no solutions, otherwise. In other cases, as in \cite{Zimmermann03Disjunctive,Krivulin05Evaluation,Krivulin06Eigenvalues,Krivulin09Methods,Krivulin11Algebraic,Krivulin11Analgebraic,Krivulin11Anextremal,Krivulin12Anew}, direct solutions are given in closed form. Finally, note that most of the existing approaches offer some particular solutions rather than give comprehensive solutions to the problems.

In this paper, we consider a multidimensional extremal problem that is a generalization of the problems examined in \cite{Krivulin05Evaluation,Krivulin06Eigenvalues,Krivulin09Methods,Krivulin11Algebraic,Krivulin11Analgebraic,Krivulin11Anextremal,Krivulin12Anew}. Particular cases of the problem arise in various applications including growth rate estimation for the state vector in stochastic dynamic systems with event synchronization \cite{Krivulin05Evaluation,Krivulin09Methods} and single facility location with Chebyshev and rectilinear metrics \cite{Krivulin11Analgebraic,Krivulin11Anextremal}. On the basis of implementation and further development of methods and techniques proposed in \cite{Krivulin05Evaluation,Krivulin06Solution,Krivulin06Eigenvalues,Krivulin09Methods,Krivulin11Algebraic,Krivulin11Analgebraic,Krivulin11Anextremal,Krivulin12Anew,Krivulin12Solution}, we give a complete solution to the problem in a closed form that provides an appropriate basis for both formal analysis and development of efficient computational procedures.

The rest of the paper is as follows. We begin with an introduction to idempotent algebra and outline basic results that underlie the subsequent solutions. Furthermore, examples of tropical extremal problems are presented and their solutions are briefly discussed. A new extremal problem is then introduced and a closed-form solution to the problem under general conditions is established. Finally, solutions to some particular cases and extensions of the problem are given.

\section{Basic Definitions and Results}

We begin with basic algebraic definitions and preliminary results from \cite{Krivulin06Solution,Krivulin06Eigenvalues,Krivulin09Methods} to provide a formal framework for subsequent analysis and solutions presented in the paper. Additional details and thorough investigation of the theory can be found in \cite{Baccelli92Synchronization,Cuninghamegreen94Minimax,Kolokoltsov97Idempotent,Golan03Semirings,Heidergott05Maxplus,Litvinov07Themaslov,Butkovic10Maxlinear}.

\subsection{Idempotent Semifield}

We consider a set $\mathbb{X}$ that is closed under binary operations, addition $\oplus$ and multiplication $\otimes$, and equipped with their related neutral elements, zero $\mathbb{0}$ and identity $\mathbb{1}$. We suppose that the algebraic system $\langle\mathbb{X},\mathbb{0},\mathbb{1},\oplus,\otimes\rangle$ is a commutative semiring with idempotent addition and invertible multiplication. Since for all $\bm{x}\in\mathbb{X}_{+}$, where $\mathbb{X}_{+}=\mathbb{X}\setminus\{\mathbb{0}\}$, there exists its multiplicative inverse $x^{-1}$, the semiring is commonly referred to as the idempotent semifield.

The integer power is introduced in the usual way to represent iterated multiplication. Moreover, we assume that the power with rational exponent is also defined and so consider the semifield to be radicable.

In what follows we omit the multiplication sign $\otimes$ as it is usual in the conventional algebra. The power notation is always used in the above sense.

The idempotent addition naturally induces a partial order on the semifield. Furthermore, we assume that the partial order can be completed to a total order, thus allowing the semifield to be linearly ordered. In the following, the relation signs and the $\min$ symbol are thought of as in terms of this linear order. 

Examples of linearly ordered radicable idempotent semifields include 
\begin{align*}
\mathbb{R}_{\max,+}
&=
\langle\mathbb{R}\cup\{-\infty\},-\infty,0,\max,+\rangle,
\\
\mathbb{R}_{\min,+}
&=
\langle\mathbb{R}\cup\{+\infty\},+\infty,0,\min,+\rangle,
\\
\mathbb{R}_{\max,\times}
&=
\langle\mathbb{R}_{+}\cup\{0\},0,1,\max,\times\rangle,
\\
\mathbb{R}_{\min,\times}
&=
\langle\mathbb{R}_{+}\cup\{+\infty\},+\infty,1,\min,\times\rangle,
\end{align*}
where $\mathbb{R}$ is the set of reals, $\mathbb{R}_{+}=\{x\in\mathbb{R}|x>0\}$.

\subsection{Idempotent Semimodule}

Consider the Cartesian product $\mathbb{X}^{n}$ with column vectors as its elements. A vector with all components equal to $\mathbb{0}$ is called the zero vector and denoted by $\mathbb{0}$. The operations of vector addition $\oplus$ and scalar multiplication $\otimes$ are routinely defined component-wise through the scalar operations introduced on $\mathbb{X}$.

The set $\mathbb{X}^{n}$ with these operations forms a finite-dimensional idempotent semimodule over $\mathbb{X}$.

A vector is called regular if it has no zero components. The set of all regular vectors of order $n$ over $\mathbb{X}_{+}$ is denoted by $\mathbb{X}_{+}^{n}$.

For any nonzero column vector $\bm{x}=(x_{i})\in\mathbb{X}^{n}$ we define a row vector $\bm{x}^{-}=(x_{i}^{-})$, where $x_{i}^{-}=x_{i}^{-1}$ if $x_{i}\ne\mathbb{0}$, and $x_{i}^{-}=\mathbb{0}$ otherwise, $i=1,\ldots,n$.

\subsection{Matrix Algebra}

For conforming matrices with entries from $\mathbb{X}$, addition and multiplication of matrices together with multiplication by scalars follow the conventional rules using the scalar operations defined on $\mathbb{X}$.

A matrix with all entries that are equal to $\mathbb{0}$ is called the zero matrix and denoted by $\mathbb{0}$. A matrix is row (column) regular if it has no zero rows (columns).

Consider the set of square matrices $\mathbb{X}^{n\times n}$. As in the conventional algebra, a matrix is diagonal if its off-diagonal entries are equal to $\mathbb{0}$. A diagonal matrix that has only $\mathbb{1}$ as the diagonal entries is the identity matrix and denoted by $I$. Finally, the exponent notation stands for repeated multiplication for any square matrix $A$ with the obvious condition that $A^{0}=I$. 

The set $\mathbb{X}^{n\times n}$ with the matrix addition and multiplication forms an idempotent semiring with identity.

For any matrix $A=(a_{ij})$ its trace is given by
$$
\mathop\mathrm{tr}A
=
\bigoplus_{i=1}^{n}a_{ii}.
$$

A matrix is called reducible if simultaneous permutations of rows and columns put it into a block-triangular normal form, and irreducible otherwise. The normal form of a matrix $A\in\mathbb{X}^{n\times n}$ is given by
\begin{equation}\label{E-MNF}
A
=
\left(
	\begin{array}{cccc}
		A_{11} 	& \mathbb{0}	& \ldots	& \mathbb{0} \\
		A_{21}	& A_{22}			& 				& \mathbb{0} \\
		\vdots	& \vdots		& \ddots	& \\
		A_{s1}	& A_{s2}		& \ldots	& A_{ss}
	\end{array}
\right),
\end{equation}
where $A_{ii}$ is either irreducible or zero matrix of order $n_{i}$, whereas $A_{ij}$ is an arbitrary matrix of size $n_{i}\times n_{j}$ for all $i=1,\ldots,s$, $j<i$, and $n_{1}+\cdots+n_{s}=n$.

\subsection{Spectrum of Matrices}

Any matrix $A\in\mathbb{X}^{n\times n}$ defines on the semimodule $\mathbb{X}^{n}$ a linear operator with certain spectral properties. Specifically, if the matrix $A$ is irreducible, it has a unique eigenvalue that is given by
\begin{equation}
\lambda
=
\bigoplus_{m=1}^{n}\mathop\mathrm{tr}\nolimits^{1/m}(A^{m})
\label{E-lambda}
\end{equation}
whereas all corresponding eigenvectors are regular.

Let the matrix $A$ be reducible and have the form \eqref{E-MNF}. All eigenvalues of $A$ are among the eigenvalues $\lambda_{i}$ of the diagonal blocks $A_{ii}$, $i=1,\ldots,s$. The value $\lambda=\lambda_{1}\oplus\cdots\oplus\lambda_{s}$ is always an eigenvalue, it is calculated by \eqref{E-lambda} and called the spectral radius of $A$.

\subsection{Linear Inequalities}

Suppose there are a matrix $A\in\mathbb{X}^{m\times n}$ and a regular vector $\bm{d}\in\mathbb{X}_{+}^{m}$. The problem is to solve with respect to the unknown vector $\bm{x}\in\mathbb{X}^{n}$ the linear inequality
\begin{equation}
A\bm{x}
\leq
\bm{d}.
\label{I-Axd}
\end{equation}
 
Clearly, if $A=\mathbb{0}$, then any vector $\bm{x}$ is a solution. Assume now the matrix $A\ne\mathbb{0}$ to have zero columns. It is easy to see that each zero column in $A$ allows the corresponding element of the solution vector $\bm{x}$ to take arbitrary values. The other elements can be found from a reduced inequality with a matrix that is formed by omitting zero columns from $A$ and so becomes column-regular. The solution to the inequality for column-regular matrices is as follows.
 
\begin{lemma}\label{L-IAxd}
A vector $\bm{x}$ is a solution of inequality \eqref{I-Axd} with a column-regular matrix $A$ and a regular vector $\bm{d}$ if and only if
$$
\bm{x}
\leq
(\bm{d}^{-}A)^{-}.
$$
\end{lemma}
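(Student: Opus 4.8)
The plan is to prove both implications of the equivalence by direct computation with the scalar operations, exploiting the fact that over an idempotent semifield the entrywise order is compatible with addition and multiplication. Write $A = (a_{ij})$, $\bm{x} = (x_j)$, $\bm{d} = (d_i)$; then inequality \eqref{I-Axd} reads $\bigoplus_{j=1}^{n} a_{ij} x_j \leq d_i$ for each $i = 1,\ldots,m$, which (since the idempotent sum is the least upper bound) is equivalent to the conjunction $a_{ij} x_j \leq d_i$ for all $i,j$. Because $\bm{d}$ is regular, each $d_i$ is invertible, so $a_{ij} x_j \leq d_i$ rewrites as $d_i^{-1} a_{ij} x_j \leq \mathbb{1}$. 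The quantity $(\bm{d}^{-} A)^{-}$ has $j$-th component $\left(\bigoplus_{i=1}^{m} d_i^{-1} a_{ij}\right)^{-1}$; note this is well defined and nonzero precisely because $A$ is column-regular, so each column sum $\bigoplus_i d_i^{-1} a_{ij}$ is a nonzero element of $\mathbb{X}_{+}$ and hence invertible.

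For the ``only if'' direction, I would assume $A\bm{x} \leq \bm{d}$, deduce $d_i^{-1} a_{ij} x_j \leq \mathbb{1}$ for every pair $(i,j)$, and then take the idempotent sum over $i$ to get $\left(\bigoplus_{i} d_i^{-1} a_{ij}\right) x_j \leq \mathbb{1}$ for each $j$; multiplying through by the inverse of the (nonzero) column sum yields $x_j \leq \left(\bigoplus_{i} d_i^{-1} a_{ij}\right)^{-1}$, which is exactly the $j$-th component of the claimed bound. For the ``if'' direction, I would run the same chain in reverse: starting from $x_j \leq (\bm{d}^{-} A)^{-}_j$ for all $j$, multiply by $d_i^{-1} a_{ij}$ (order is preserved under multiplication) and use that $d_i^{-1} a_{ij}$ is dominated by the full column sum to obtain $d_i^{-1} a_{ij} x_j \leq \mathbb{1}$, i.e. $a_{ij} x_j \leq d_i$; summing over $j$ then gives the $i$-th row of $A\bm{x} \leq \bm{d}$.

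The only subtlety worth flagging is the zero-component bookkeeping: a component $x_j = \mathbb{0}$ trivially satisfies every $a_{ij} x_j \leq d_i$, and on the other side the bound $(\bm{d}^{-}A)^{-}_j$ is a nonzero element of $\mathbb{X}_{+}$, so the inequality $x_j \leq (\bm{d}^{-}A)^{-}_j$ imposes a genuine upper bound but never forces $x_j$ to be nonzero; column-regularity of $A$ is precisely what guarantees this bound is finite (not $\mathbb{0}^{-} $, which is undefined). Thus the main ``obstacle'' is less a mathematical difficulty than the care needed to track which quantities are invertible and to justify passing freely between the single idempotent inequality $\bigoplus_j a_{ij}x_j \le d_i$ and the family of scalar inequalities $a_{ij}x_j \le d_i$. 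Once that is set up, both directions are a short symmetric computation.
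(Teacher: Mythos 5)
Your proof is correct. Note that the paper itself does not prove Lemma~\ref{L-IAxd}: it is imported as a known preliminary result from the cited earlier work, where the argument is usually run at the matrix level rather than entrywise. The standard residuated version goes: if $A\bm{x}\leq\bm{d}$, multiply on the left by $\bm{d}^{-}$ to get $\bm{d}^{-}A\bm{x}\leq\bm{d}^{-}\bm{d}=\mathbb{1}$, and then use $(\bm{d}^{-}A)^{-}(\bm{d}^{-}A)\geq I$ (valid because $\bm{d}^{-}A$ is a row vector with all components nonzero, thanks to column-regularity of $A$ and regularity of $\bm{d}$) to conclude $\bm{x}\leq(\bm{d}^{-}A)^{-}(\bm{d}^{-}A)\bm{x}\leq(\bm{d}^{-}A)^{-}$; conversely, from $\bm{x}\leq(\bm{d}^{-}A)^{-}$ one gets $\bm{d}^{-}A\bm{x}\leq(\bm{d}^{-}A)(\bm{d}^{-}A)^{-}=\mathbb{1}$ and then $A\bm{x}\leq\bm{d}\bm{d}^{-}A\bm{x}\leq\bm{d}$, using $\bm{d}\bm{d}^{-}\geq I$. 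Your componentwise argument is exactly the scalar unpacking of this: the identities $\bm{d}\bm{d}^{-}\geq I$, $\bm{y}^{-}\bm{y}\geq I$ and $\bm{y}\bm{y}^{-}=\mathbb{1}$ correspond to your steps of summing over $i$, dominating $d_i^{-1}a_{ij}$ by the column sum, and cancelling the invertible column sum. Both routes are equally rigorous; the matrix form is shorter and reuses the $(\cdot)^{-}$ calculus, while your scalar form makes the role of total order, absence of zero divisors, and the zero-component bookkeeping completely explicit, which is a legitimate advantage. The only point worth stating a bit more carefully in your write-up is the justification that a finite idempotent sum is the least upper bound of its terms with respect to the order induced by $\oplus$, which is what licenses the passage between $\bigoplus_j a_{ij}x_j\leq d_i$ and the family $a_{ij}x_j\leq d_i$; this is standard and unproblematic here.
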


For a given square matrix $A\in\mathbb{X}^{n\times n}$ and a vector $\bm{b}\in\mathbb{X}^{n}$, we now find all regular solutions $\bm{x}\in\mathbb{X}_{+}^{n}$ to the inequality
\begin{equation}
A\bm{x}\oplus\bm{b}
\leq
\bm{x}.
\label{I-Axbx}
\end{equation}

To solve the problem, we follow an approach based on the implementation of the function $\mathop\mathrm{Tr}$ that maps each square matrix $A\in\mathbb{X}^{n\times n}$ to a scalar
$$
\mathop\mathrm{Tr}(A)
=
\bigoplus_{m=1}^{n}\mathop\mathrm{tr} A^{m}.
$$

For any matrix $A\in\mathbb{X}^{n\times n}$, we introduce a matrix
$$
A^{\ast}
=
I\oplus A\oplus\cdots\oplus A^{n-1}.
$$

Assume a matrix $A$ to be represented in its normal form \eqref{E-MNF}. We define a diagonal matrix
$$
D
=
\left(
	\begin{array}{ccc}
		A_{11}		&					& \mathbb{0} \\
							& \ddots	& \\
		\mathbb{0}&					& A_{ss}
	\end{array}
\right),
$$
and a low triangular matrix 
$$
T
=
\left(
	\begin{array}{cccc}
		\mathbb{0}& \ldots	& \ldots		& \mathbb{0} \\
		A_{21}		& \ddots	& 					& \vdots \\
		\vdots		& \ddots	& \ddots		& \vdots \\
		A_{s1}		& \ldots	& A_{s,s-1}	& \mathbb{0}
	\end{array}
\right),
$$
which present the diagonal and triangular parts of the decomposition of $A$ in the form
\begin{equation}
A
=
D\oplus T.
\label{E-ADT}
\end{equation}

Note that if the matrix $A$ is irreducible, we put $D=A$ and $T=\mathbb{0}$.

\begin{theorem}\label{T-IAxbx}
Let $\bm{x}$ be the general regular solution of inequality \eqref{I-Axbx} with a matrix $A$ in the form of \eqref{E-ADT}. Then the following statements are valid:
\begin{enumerate}
\item If $\mathop\mathrm{Tr}(A)\leq\mathbb{1}$, then $\bm{x}=(D^{\ast}T)^{\ast}D^{\ast}\bm{u}$ for all $\bm{u}\in\mathbb{X}_{+}^{n}$ such that $\bm{u}\geq\bm{b}$.
\item If $\mathop\mathrm{Tr}(A)>\mathbb{1}$, then there is no regular solution.
\end{enumerate}
\end{theorem}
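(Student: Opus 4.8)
The plan is to reduce inequality \eqref{I-Axbx} to a solved triangular case by exploiting the decomposition \eqref{E-ADT}. First I would rewrite $A\bm{x}\oplus\bm{b}\leq\bm{x}$ as $(D\oplus T)\bm{x}\oplus\bm{b}\leq\bm{x}$, i.e.\ $D\bm{x}\oplus(T\bm{x}\oplus\bm{b})\leq\bm{x}$. Treating $\bm{c}=T\bm{x}\oplus\bm{b}$ momentarily as a fixed right-hand side, this is an inequality of the form $D\bm{x}\oplus\bm{c}\leq\bm{x}$ with a \emph{diagonal} matrix $D$ whose diagonal blocks are the irreducible (or zero) matrices $A_{ii}$. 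For such a matrix one shows, block by block, that a regular solution exists if and only if every block satisfies $\mathop\mathrm{Tr}(A_{ii})\leq\mathbb{1}$, and in that case the general regular solution is $\bm{x}=D^{\ast}\bm{c}$ and above: $\bm{x}\geq D^{\ast}\bm{c}$, equivalently $\bm{x}=D^{\ast}\bm{v}$ for any $\bm{v}\geq\bm{c}$. The key subfacts here are that $D^{\ast}D^{\ast}=D^{\ast}$ (so $D^{\ast}\bm{c}$ is indeed a solution), that $DD^{\ast}\leq D^{\ast}$, and that $A^{n}D^{\ast}$ controls whether iterating terminates — the condition $\mathop\mathrm{Tr}(A_{ii})\leq\mathbb{1}$ being exactly $\lambda_{i}\leq\mathbb{1}$ via \eqref{E-lambda}, which guarantees $D^{m}$ does not blow up and regularity is preserved.

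Next I would feed $\bm{c}=T\bm{x}\oplus\bm{b}$ back in. Substituting $\bm{x}=D^{\ast}\bm{v}$ into $\bm{v}\geq T\bm{x}\oplus\bm{b}$ yields $\bm{v}\geq TD^{\ast}\bm{v}\oplus\bm{b}$, which is again an inequality of the type \eqref{I-Axbx}, now with matrix $D^{\ast}T$ in place of $A$ (note $TD^{\ast}$ and $D^{\ast}T$ play symmetric roles; one arranges the algebra so the relevant operator is $D^{\ast}T$, which is strictly lower block-triangular with zero diagonal since $T$ is strictly lower triangular and $D^{\ast}$ is block-diagonal). Because $D^{\ast}T$ is nilpotent — $(D^{\ast}T)^{n}=\mathbb{0}$ as $T$ has $s\leq n$ strictly-below-diagonal block levels — the trace condition is automatically satisfied: $\mathop\mathrm{Tr}(D^{\ast}T)=\mathbb{0}\leq\mathbb{1}$, and $(D^{\ast}T)^{\ast}=I\oplus D^{\ast}T\oplus\cdots\oplus(D^{\ast}T)^{n-1}$ is the genuine solution matrix. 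Applying the already-established diagonal/triangular result once more gives the general regular solution $\bm{v}=(D^{\ast}T)^{\ast}\bm{u}$ for all regular $\bm{u}\geq\bm{b}$, and then $\bm{x}=D^{\ast}\bm{v}=D^{\ast}(D^{\ast}T)^{\ast}\bm{u}$; a short manipulation using $D^{\ast}D^{\ast}=D^{\ast}$ rearranges this into $(D^{\ast}T)^{\ast}D^{\ast}\bm{u}$, matching statement~1. For statement~2, if some $\mathop\mathrm{Tr}(A_{ii})>\mathbb{1}$ then $\mathop\mathrm{Tr}(A)>\mathbb{1}$, and already the $i$-th diagonal block inequality $A_{ii}\bm{x}_{i}\oplus(\cdots)\leq\bm{x}_{i}$ has no regular solution, since iterating forces a component to exceed any bound; hence neither does the whole system.

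The main obstacle I anticipate is the diagonal-block base case: proving cleanly that for an \emph{irreducible} matrix $B$ the inequality $B\bm{x}\oplus\bm{c}\leq\bm{x}$ has a regular solution iff $\mathop\mathrm{Tr}(B)\leq\mathbb{1}$, with general solution $\bm{x}\geq B^{\ast}\bm{c}$. The ``if'' direction needs the identity $B^{\ast}=I\oplus BB^{\ast}$ together with $B^{n}B^{\ast}\leq B^{\ast}$ (equivalently $B^{n}\leq B^{\ast}$, i.e.\ the series stabilizes at $n-1$ terms), which is where $\mathop\mathrm{Tr}(B)\leq\mathbb{1}$ enters: it bounds every cyclic product $b_{i_{1}i_{2}}\cdots b_{i_{k}i_{1}}$ by $\mathbb{1}$, so no path gains by revisiting a node. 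The ``only if'' direction is the delicate part: from a regular solution one extracts, along a critical cycle of $B$ realizing $\lambda$ in \eqref{E-lambda}, a chain of scalar inequalities whose product forces $\lambda\leq\mathbb{1}$, hence $\mathop\mathrm{Tr}(B)\leq\mathbb{1}$. The bookkeeping that turns $\lambda\leq\mathbb{1}$ into $\mathop\mathrm{Tr}(B^{m})\leq\mathbb{1}$ for all $m\leq n$ (and vice versa) via \eqref{E-lambda} is routine but must be stated carefully. Once this base case is in hand, the block-triangular reduction above is essentially formal, relying only on nilpotency of $D^{\ast}T$ and the absorption identities for the $\ast$-operator.
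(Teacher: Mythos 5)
First, a point of reference: the paper itself gives no proof of Theorem~\ref{T-IAxbx} --- it is imported as a preliminary result from the author's cited earlier work --- so your argument can only be judged on its own terms. On those terms the plan is the natural one and, in outline, sound: split $A=D\oplus T$ as in \eqref{E-ADT}, solve the block-diagonal subproblem under the trace condition, use that the correction built from $T$ is nilpotent because $T$ is strictly lower block-triangular, and recombine; the factored form $(D^{\ast}T)^{\ast}D^{\ast}$ of the answer is exactly what such a two-stage derivation produces. For statement~2 your cycle argument works, but note it can be done more cheaply and without irreducibility or critical cycles: a regular solution gives $A^{m}\bm{x}\leq\bm{x}$ for every $m$, hence $(A^{m})_{ii}x_{i}\leq x_{i}$ and $\mathop\mathrm{tr}A^{m}\leq\mathbb{1}$, so $\mathop\mathrm{Tr}(A)\leq\mathbb{1}$ directly.

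Two steps as written are wrong, though both are repairable. (i) In the base case you claim the solution set of $D\bm{x}\oplus\bm{c}\leq\bm{x}$ is ``$\bm{x}\geq D^{\ast}\bm{c}$, equivalently $\bm{x}=D^{\ast}\bm{v}$ for $\bm{v}\geq\bm{c}$''. These are not equivalent: in $\mathbb{R}_{\max,+}$ take an irreducible $2\times 2$ block with off-diagonal entries $1$ and $-1$ and $\bm{c}=\mathbb{0}$; the solution set of $D\bm{x}\leq\bm{x}$ is the line $x_{1}=1+x_{2}$, whereas $\{\bm{x}\geq D^{\ast}\bm{c}\}$ is all regular vectors. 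The correct characterization, which is the one you actually carry forward, is $\bm{x}=D^{\ast}\bm{v}$ with regular $\bm{v}\geq\bm{c}$ (equivalently, any solution satisfies $D^{\ast}\bm{x}=\bm{x}$ and $\bm{x}\geq\bm{c}$). (ii) The recombination is off. Substituting $\bm{x}=D^{\ast}\bm{v}$ turns the remaining constraint into $TD^{\ast}\bm{v}\oplus\bm{b}\leq\bm{v}$, whose general regular solution is $\bm{v}=(TD^{\ast})^{\ast}\bm{u}$, $\bm{u}\geq\bm{b}$ --- the nilpotent operator that appears is $TD^{\ast}$, not $D^{\ast}T$. Then
$$
\bm{x}
=
D^{\ast}(TD^{\ast})^{\ast}\bm{u}
=
(D^{\ast}T)^{\ast}D^{\ast}\bm{u}
$$
by the sliding identity $D^{\ast}(TD^{\ast})^{k}=(D^{\ast}T)^{k}D^{\ast}$. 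Your proposed shortcut, writing $\bm{x}=D^{\ast}(D^{\ast}T)^{\ast}\bm{u}$ and rearranging via $D^{\ast}D^{\ast}=D^{\ast}$, is not valid because $D^{\ast}(D^{\ast}T)^{\ast}\neq(D^{\ast}T)^{\ast}D^{\ast}$ in general: already with two blocks the lower-left block of the former is $D_{22}^{\ast}A_{21}$, while the latter (and the true solution matrix) has $D_{22}^{\ast}A_{21}D_{11}^{\ast}$, so your form loses solutions. Finally, keep in mind that with the paper's finite truncation $A^{\ast}=I\oplus\cdots\oplus A^{n-1}$, the absorption facts you rely on ($DD^{\ast}\leq D^{\ast}$, $D^{\ast}D^{\ast}=D^{\ast}$) are themselves consequences of $\mathop\mathrm{Tr}(D)\leq\mathbb{1}$ via $D^{m}\leq D^{\ast}$ for $m\geq n$; you gesture at this, but it must be stated at the point of use.
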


\section{Tropical Extremal Problems}

We now turn to the discussion of multidimensional extremal problems formulated in terms of idempotent algebra. The problems are established to minimize both linear and nonlinear functionals defined on semimodules over idempotent semifields, subject to constraints in the form of linear equalities and inequalities.

In this section, the symbols $A$ and $C$ stand for given matrices, $\bm{b}$, $\bm{d}$, $\bm{p}$, $\bm{q}$, $\bm{g}$ and $\bm{h}$ for vectors, and $r$ and $s$ for numbers. We start with an idempotent analogue of linear programming problems examined in \cite{Butkovic09Introduction,Butkovic10Maxlinear} and defined in terms of the semifield $\mathbb{R}_{\max,+}$ to find the solution $\bm{x}$ to the problem
\begin{gather*}
\min\ (\bm{p}^{T}\bm{x}\oplus r),
\\
A\bm{x}\oplus\bm{b}\leq C\bm{x}\oplus\bm{d}.
\end{gather*}

A solution technique that is based on an iterative algorithm and called the alternating method is proposed which produces a solution if any, or indicates that there is no solution otherwise.

The technique is extended in \cite{Gaubert12Tropical} to provide an iterative computational scheme for a problem with nonlinear objective function given by
\begin{gather*}
\min\ (\bm{p}^{T}\bm{x}\oplus r)(\bm{q}^{T}\bm{x}\oplus s)^{-1},
\\
A\bm{x}\oplus\bm{b}\leq C\bm{x}\oplus\bm{d}.
\end{gather*}

There are certain problems which can be solved directly in a closed form. Specifically, an explicit formula is proposed in \cite{Zimmermann03Disjunctive} within the framework of optimization of max-separable functions under disjunctive constraints for the solution of the problem
\begin{gather*}
\min\ \bm{p}^{T}\bm{x},
\\
C\bm{x}\geq\bm{b},
\\
\bm{g}\leq\bm{x}\leq\bm{h}.
\end{gather*}

Furthermore, in \cite{Krivulin05Evaluation,Krivulin06Eigenvalues,Krivulin09Methods,Krivulin11Algebraic,Krivulin11Analgebraic}, a problem is examined which is to find regular solutions $\bm{x}$ that provide
$$
\min\ \bm{x}^{-}A\bm{x}.
$$

To get a closed form solution to the problem, an approach is applied that uses results of the spectral theory of linear operators in idempotent algebra. 

Finally, a closed form solution based on a technique of solving linear equations and inequalities is derived in \cite{Krivulin12Anew} for the problem
\begin{gather*}
\min\ (\bm{x}^{-}\bm{p}\oplus\bm{q}^{-}\bm{x}),
\\
A\bm{x}\leq\bm{x}.
\end{gather*}

In the rest of the paper, we consider a problem with a general objective function that actually contains the objective functions of the last two problems as particular cases. For the problem when there are no additional constraints imposed on the solution, a general solution is given in a closed form.

\section{A New General Extremal Problem}

Given a matrix $A\in\mathbb{X}^{n\times n}$ and vectors $\bm{p},\bm{q}\in\mathbb{X}^{n}$, consider the problem to find $\bm{x}$ that provides
\begin{equation}
\min_{\bm{x}\in\mathbb{X}_{+}^{n}}(\bm{x}^{-}A\bm{x}\oplus\bm{x}^{-}\bm{p}\oplus\bm{q}^{-}\bm{x}).
\label{P-xAxxpqx}
\end{equation}

A complete explicit solution to the problem under general conditions as well as to some particular cases and extensions is given in the subsequent sections.

\subsection{The Main Result}

We start with a solution to the problem in a general setting that is appropriate for many applications.

\begin{theorem}\label{T-xAxxpqx}
Suppose $A$ is a matrix in the form \eqref{E-MNF}, $\bm{p}$ is a vector, $\bm{q}$ is a regular vector, $\lambda$ is the spectral radius of $A$, and
$$
\Delta
=
(\bm{q}^{-}\bm{p})^{1/2},
\qquad
\mu
=
\lambda\oplus\Delta\ne\mathbb{0}.
$$

Define a matrix
$$
A_{\mu}
=
\mu^{-1}A
=
D_{\mu}\oplus T_{\mu},
$$
where $D_{\mu}$ and $T_{\mu}$ are respective diagonal and lower triangular parts of $A_{\mu}$, and a matrix
$$
B
=
(D_{\mu}^{\ast}T_{\mu})^{\ast}D_{\mu}^{\ast}.
$$

Then the minimum in \eqref{P-xAxxpqx} is equal to $\mu$ and attained if and only if
$$
\bm{x}
=
B\bm{u}
$$
for all regular vectors $\bm{u}$ such that
$$
\mu^{-1}\bm{p}
\leq
\bm{u}
\leq
\mu(\bm{q}^{-}B)^{-}.
$$
\end{theorem}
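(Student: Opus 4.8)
The plan is to derive a sharp lower bound for the objective function and then show it is attained precisely on the claimed set, reducing everything to the linear inequality theory of Lemma~\ref{L-IAxd} and Theorem~\ref{T-IAxbx}.

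First I would establish the lower bound $\bm{x}^{-}A\bm{x}\oplus\bm{x}^{-}\bm{p}\oplus\bm{q}^{-}\bm{x}\geq\mu$ for every regular $\bm{x}$. The term $\bm{x}^{-}A\bm{x}$ is bounded below by the spectral radius $\lambda$: indeed, for an irreducible block this is the standard fact that $\bm{x}^{-}A\bm{x}\geq\lambda$ with equality on eigenvectors, and the reducible case follows by looking at the subvector corresponding to the diagonal block realizing $\lambda$. For the remaining two terms, the key inequality is $(\bm{x}^{-}\bm{p})(\bm{q}^{-}\bm{x})\geq\bm{q}^{-}\bm{p}$, which holds because $(\bm{x}^{-}\bm{p})(\bm{q}^{-}\bm{x})=\bigoplus_{i,j}x_{i}^{-1}p_{i}q_{j}^{-1}x_{j}\geq\bigoplus_{k}x_{k}^{-1}p_{k}q_{k}^{-1}x_{k}=\bm{q}^{-}\bm{p}$; together with $a\oplus b\geq(ab)^{1/2}$ (a consequence of idempotency and radicability) this gives $\bm{x}^{-}\bm{p}\oplus\bm{q}^{-}\bm{x}\geq\Delta$. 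Combining, the objective is $\geq\lambda\oplus\Delta=\mu$.

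Next I would reformulate the condition that the objective does not exceed $\mu$. A regular $\bm{x}$ satisfies $\bm{x}^{-}A\bm{x}\oplus\bm{x}^{-}\bm{p}\oplus\bm{q}^{-}\bm{x}\leq\mu$ iff the three scalar inequalities $\bm{x}^{-}A\bm{x}\leq\mu$, $\bm{x}^{-}\bm{p}\leq\mu$, and $\bm{q}^{-}\bm{x}\leq\mu$ all hold. By Lemma~\ref{L-IAxd} the first is equivalent to $A\bm{x}\leq\mu\bm{x}$, i.e.\ $A_{\mu}\bm{x}\leq\bm{x}$, which is inequality \eqref{I-Axbx} with zero $\bm{b}$; since $\mathop\mathrm{Tr}(A_{\mu})\leq\mathbb{1}$ holds exactly because $\lambda\leq\mu$ (the spectral radius of $A_{\mu}$ is $\mu^{-1}\lambda\leq\mathbb{1}$), Theorem~\ref{T-IAxbx} tells us its regular solution set is $\{\bm{x}=B\bm{u}:\bm{u}\in\mathbb{X}_{+}^{n}\}$, where $B=(D_{\mu}^{\ast}T_{\mu})^{\ast}D_{\mu}^{\ast}$. (One must check $\mathbb{1}\leq\mathop\mathrm{Tr}(A_{\mu})$ is not needed and that $B\geq I$ so that $B\bm{u}$ is regular for regular $\bm{u}$; this follows from $D_{\mu}^{\ast}\geq I$ and $(D_{\mu}^{\ast}T_{\mu})^{\ast}\geq I$.) Substituting $\bm{x}=B\bm{u}$ into the other two constraints, the inequality $\bm{x}^{-}\bm{p}\leq\mu$ becomes, via Lemma~\ref{L-IAxd} applied to $\bm{p}\leq\mu\bm{x}=\mu B\bm{u}$ together with $B\geq I$, equivalent to the lower bound $\bm{u}\geq\mu^{-1}\bm{p}$ — here one needs that any $\bm{u}$ producing a given $\bm{x}$ can be taken to be exactly $B\bm{u}$'s "canonical" preimage, or more simply that the set $\{B\bm{u}\}$ is closed under the relevant operations so it suffices to parametrize by $\bm{u}$ directly. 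The inequality $\bm{q}^{-}\bm{x}\leq\mu$ becomes $\bm{q}^{-}B\bm{u}\leq\mu$, i.e.\ by Lemma~\ref{L-IAxd}, $\bm{u}\leq\mu(\bm{q}^{-}B)^{-}$. This yields exactly the stated two-sided bound $\mu^{-1}\bm{p}\leq\bm{u}\leq\mu(\bm{q}^{-}B)^{-}$.

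Finally I would check the set is nonempty, so that the minimum $\mu$ is genuinely attained: one must verify that $\mu^{-1}\bm{p}\leq\mu(\bm{q}^{-}B)^{-}$, equivalently $\bm{q}^{-}B\bm{p}\leq\mu^{2}$. Using $B\geq I$ one has $\bm{q}^{-}B\bm{p}\geq\bm{q}^{-}\bm{p}=\Delta^{2}$; the reverse bound $\bm{q}^{-}B\bm{p}\leq\mu^{2}$ is the substantive point and should follow from the structure of $B$ as a sum of products of powers of $A_{\mu}$ bounded by $\mu^{-1}$-scaled powers of $A$, combined with the bound $\bm{q}^{-}A^{k}\bm{p}\leq\lambda^{k}(\bm{q}^{-}\bm{p})$ or a similar entrywise estimate — I expect this verification, i.e.\ controlling the mixed term $\bm{q}^{-}B\bm{p}$ by $\mu^{2}$, to be the main obstacle, since it is where the definition $\mu=\lambda\oplus\Delta$ is used in full and where the interplay between the spectral part $\lambda$ and the "geometric mean" part $\Delta$ must be reconciled. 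The remaining details (regularity of $\bm{x}=B\bm{u}$, equivalence of the attained-minimum characterization with the membership of $\bm{u}$ in the stated interval) are routine once these bounds are in place.
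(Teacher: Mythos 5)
Your overall strategy coincides with the paper's: prove the two lower bounds $\lambda$ and $\Delta$, then characterize the set where the objective equals $\mu$ by reducing to the linear inequalities of Lemma~\ref{L-IAxd} and Theorem~\ref{T-IAxbx}. The lower-bound part is fine (your mean inequality $a\oplus b\geq(ab)^{1/2}$ together with $(\bm{x}^{-}\bm{p})(\bm{q}^{-}\bm{x})\geq\bm{q}^{-}\bm{p}$ is an equivalent variant of the paper's argument). In the characterization, however, you split $f(\bm{x})\leq\mu$ into three scalar inequalities and then try to translate $\bm{x}^{-}\bm{p}\leq\mu$, i.e.\ $\mu^{-1}\bm{p}\leq B\bm{u}$, into the bound $\bm{u}\geq\mu^{-1}\bm{p}$; as you yourself note, this is not automatic because $\bm{u}\mapsto B\bm{u}$ is not injective, and you leave the point unresolved. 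The gap is closable (if $A_{\mu}\bm{x}\leq\bm{x}$ then $D_{\mu}^{\ast}\bm{x}=\bm{x}$ and $(D_{\mu}^{\ast}T_{\mu})^{\ast}\bm{x}=\bm{x}$, hence $B\bm{x}=\bm{x}$, so one may take $\bm{u}=\bm{x}$), but the cleaner route is the paper's: keep $\bm{x}^{-}A\bm{x}\oplus\bm{x}^{-}\bm{p}\leq\mu$ together, show it is equivalent to $A_{\mu}\bm{x}\oplus\mu^{-1}\bm{p}\leq\bm{x}$ by multiplying by $\mu^{-1}\bm{x}$ (using $\bm{x}\bm{x}^{-}\geq I$) and back by $\mu\bm{x}^{-}$, and then invoke Theorem~\ref{T-IAxbx} with $\bm{b}=\mu^{-1}\bm{p}$, which delivers the parametrization $\bm{x}=B\bm{u}$ and the lower bound $\bm{u}\geq\mu^{-1}\bm{p}$ simultaneously.

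The more serious defect is your final step. The inequality $\bm{q}^{-}B\bm{p}\leq\mu^{2}$ that you plan to verify is not a routine estimate: it can fail under the stated hypotheses, and the bound $\bm{q}^{-}A^{k}\bm{p}\leq\lambda^{k}\bm{q}^{-}\bm{p}$ you want to use is false whenever $A$ is nilpotent but nonzero (then $\lambda=\mathbb{0}$ while $\bm{q}^{-}A\bm{p}$ may be nonzero). For instance, in $\mathbb{R}_{\max,+}$ with $n=2$, $a_{21}=100$ the only finite entry of $A$, and $\bm{p}=\bm{q}=(0,0)^{T}$, one has $\lambda=\mathbb{0}$, $\Delta=\mu=0=\mathbb{1}$, $B=I\oplus A$, and $\bm{q}^{-}B\bm{p}=100>\mu^{2}$, so the interval $\mu^{-1}\bm{p}\leq\bm{u}\leq\mu(\bm{q}^{-}B)^{-}$ is empty; a direct computation shows $\min f=100/3>\mu$ there. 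So nonemptiness of the interval (equivalently, attainment of the value $\mu$) cannot be derived from the hypotheses, and your proposed way of finishing the proof would fail at exactly this point. Note that the paper's own proof does not attempt this step: it establishes only the bound $f(\bm{x})\geq\mu$ and the description of the solution set of $f(\bm{x})=\mu$ as $\{B\bm{u}\}$ over the stated interval, so the attainment assertion is left without verification there as well. Your instinct that the mixed term $\bm{q}^{-}B\bm{p}$ is the crux is sound, but it is an additional condition to be assumed or checked, not a consequence to be proved.
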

\begin{proof}
We show that both $\lambda$ and $\Delta$ are lower bounds for the objective function in \eqref{P-xAxxpqx}, and then get all regular vectors $\bm{x}$ that yield the value $\mu=\lambda\oplus\Delta$ of the function. To verify that $\lambda$ is a lower bound, we write
$$
\bm{x}^{-}A\bm{x}\oplus\bm{x}^{-}\bm{p}\oplus\bm{q}^{-}\bm{x}
\geq
\bm{x}^{-}A\bm{x}.
$$

Assume the matrix $A$ to be irreducible and $\lambda$ to be its unique eigenvalue. We take a corresponding eigenvector $\bm{x}_{0}$ and note that for all $\bm{x}\in\mathbb{X}_{+}^{n}$,
$$
\bm{x}_{0}^{-}\bm{x}_{0}
=
\mathbb{1},
\qquad
\bm{x}\bm{x}_{0}^{-}
\geq(\bm{x}^{-}\bm{x}_{0})^{-1}I.
$$

Furthermore, we have
\begin{multline*}
\bm{x}^{-}A\bm{x}
=
\bm{x}^{-}A\bm{x}\bm{x}_{0}^{-}\bm{x}_{0}
=
\bm{x}^{-}A(\bm{x}\bm{x}_{0}^{-})\bm{x}_{0}
\\
\geq
\bm{x}^{-}A\bm{x}_{0}(\bm{x}^{-}\bm{x}_{0})^{-1}
=
\lambda\bm{x}^{-}\bm{x}_{0}(\bm{x}^{-}\bm{x}_{0})^{-1}
=
\lambda.
\end{multline*}

Consider an arbitrary matrix $A$ taking the form \eqref{E-MNF}. Any vector $\bm{x}$ now admits a decomposition into subvectors $\bm{x}_{1},\ldots,\bm{x}_{s}$ according to the decomposition of $A$ into column blocks. With the above result for irreducible matrices, we obtain
$$
\bm{x}^{-}A\bm{x}
=
\bigoplus_{i=1}^{s}\bigoplus_{j=1}^{i}
\bm{x}_{i}^{-}A_{ij}\bm{x}_{j}
\geq
\bigoplus_{i=1}^{s}
\bm{x}_{i}^{-}A_{ii}\bm{x}_{i}
\geq
\bigoplus_{i=1}^{s}\lambda_{i}
=
\lambda.
$$

Now we show that $\Delta=(\bm{q}^{-}\bm{p})^{1/2}$ is also a lower bound for the objective function. We have
$$
\bm{x}^{-}A\bm{x}\oplus\bm{x}^{-}\bm{p}\oplus\bm{q}^{-}\bm{x}
\geq
\bm{x}^{-}\bm{p}\oplus\bm{q}^{-}\bm{x}.
$$

Let us take any vector $\bm{x}\in\mathbb{X}_{+}^{n}$ and denote
$$
r
=
\bm{x}^{-}\bm{p}\oplus\bm{q}^{-}\bm{x}.
$$

The last equality leads to two inequalities 
$$
r
\geq
\bm{q}^{-}\bm{x}
>
\mathbb{0},
\qquad
r
\geq
\bm{x}^{-}\bm{p}.
$$

Multiplication of the first inequality by $r^{-1}\bm{x}^{-}$ from the right gives $\bm{x}^{-}\geq r^{-1}\bm{q}^{-}\bm{x}\bm{x}^{-}\geq r^{-1}\bm{q}^{-}$. Substitution of $\bm{x}^{-}\geq r^{-1}\bm{q}^{-}$ into the second results in $r\geq r^{-1}\bm{q}^{-}\bm{p}=r^{-1}\Delta^{2}$, whence it follows that
$$
\bm{x}^{-}\bm{p}\oplus\bm{q}^{-}\bm{x}
=
r
\geq
\Delta.
$$

By combining both bounds, we conclude that
$$
\bm{x}^{-}A\bm{x}\oplus\bm{x}^{-}\bm{p}\oplus\bm{q}^{-}\bm{x}
\geq
\lambda\oplus\Delta
=
\mu.
$$

It remains to find all regular solutions $\bm{x}$ of the equation
$$
\bm{x}^{-}A\bm{x}\oplus\bm{x}^{-}\bm{p}\oplus\bm{q}^{-}\bm{x}
=
\mu.
$$

Since $\bm{x}^{-}A\bm{x}\oplus\bm{x}^{-}\bm{p}\oplus\bm{q}^{-}\bm{x}\geq\mu$ for all $\bm{x}\in\mathbb{X}_{+}^{n}$, the set of regular solutions of the equation coincides with that of the inequality
$$
\bm{x}^{-}A\bm{x}\oplus\bm{x}^{-}\bm{p}\oplus\bm{q}^{-}\bm{x}
\leq
\mu,
$$
which itself is equivalent to the system of inequalities
\begin{align}
\bm{x}^{-}A\bm{x}\oplus\bm{x}^{-}\bm{p}
&\leq
\mu,
\label{I-xAxxpmu}
\\
\bm{q}^{-}\bm{x}
&\leq
\mu.
\label{I-qxmu}
\end{align}

Let us consider inequality \eqref{I-xAxxpmu}. After multiplication of the inequality by $\mu^{-1}\bm{x}$ from the left, we write
$$
A_{\mu}\bm{x}\oplus\mu^{-1}\bm{p}\leq\mu^{-1}\bm{x}\bm{x}^{-}A\bm{x}\oplus\mu^{-1}\bm{x}\bm{x}^{-}\bm{p}\leq\bm{x},
$$
and then arrive at the inequality
$$
A_{\mu}\bm{x}\oplus\mu^{-1}\bm{p}
\leq
\bm{x}.
$$

On the other hand, left multiplication of the obtained inequality by $\mu\bm{x}^{-}$ directly yields inequality \eqref{I-xAxxpmu}, and thus both inequalities are equivalent.

Since $\mathop\mathrm{Tr}(A_{\mu})=\mathop\mathrm{Tr}(\mu^{-1}A)\leq\mathbb{1}$, we can apply Theorem~\ref{T-IAxbx} to the last inequality so as to get the general solution of inequality \eqref{I-xAxxpmu} in the form
$$
\bm{x}
=
(D_{\mu}^{\ast}T_{\mu})^{\ast}D_{\mu}^{\ast}\bm{u}
=
B\bm{u},
$$
where $\bm{u}\in\mathbb{X}_{+}^{n}$ is any vector such that $\bm{u}\geq\mu^{-1}\bm{p}$.

Substitution of the solution into inequality \eqref{I-qxmu} gives an inequality $\bm{q}^{-}B\bm{u}\leq\mu$. Application of Lemma~\ref{L-IAxd} to the last inequality yields $\bm{u}\leq\mu(\bm{q}^{-}B)^{-}$.

By combining lower and upper bounds obtained for the vector $\bm{u}$, we finally arrive at the solution
$$
\bm{x}
=
B\bm{u},
$$
for all $\bm{u}\in\mathbb{X}_{+}^{n}$ such that
$$
\mu^{-1}\bm{p}
\leq
\bm{u}
\leq
\mu(\bm{q}^{-}B)^{-}.
\qedhere
$$
\end{proof}

\subsection{Particular Cases and Extensions}

Consider problem \eqref{P-xAxxpqx} with an irreducible matrix $A$. Since in this case $D_{\mu}=A_{\mu}$, $T_{\mu}=\mathbb{0}$, and $B=A_{\mu}^{\ast}$, the statement of Theorem~\ref{T-xAxxpqx} takes a reduced form.
\begin{corollary}
If $A$ is an irreducible matrix, then the solution set of \eqref{P-xAxxpqx} is given by
$$
\bm{x}
=
A_{\mu}^{\ast}\bm{u},
$$
for all $\bm{u}\in\mathbb{X}_{+}^{n}$ such that
$$
\mu^{-1}\bm{p}
\leq
\bm{u}
\leq
\mu(\bm{q}^{-}A_{\mu}^{\ast})^{-}.
$$
\end{corollary}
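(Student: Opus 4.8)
The plan is to derive the corollary directly from Theorem~\ref{T-xAxxpxqx}—sorry, Theorem~\ref{T-xAxxpqx}—by specializing the matrix decomposition \eqref{E-ADT} to the irreducible case. The key observation is the remark already made in the excerpt just below \eqref{E-ADT}: if $A$ is irreducible we set $D = A$ and $T = \mathbb{0}$. Since $A_{\mu} = \mu^{-1}A$ is merely a scalar multiple of $A$, it inherits reducibility/irreducibility from $A$; hence $A_{\mu}$ is irreducible and its diagonal–triangular decomposition is $D_{\mu} = A_{\mu}$, $T_{\mu} = \mathbb{0}$.

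First I would substitute $T_{\mu} = \mathbb{0}$ into the definition of $B$. We have $D_{\mu}^{\ast} T_{\mu} = \mathbb{0}$, so
$$
(D_{\mu}^{\ast} T_{\mu})^{\ast} = (\mathbb{0})^{\ast} = I \oplus \mathbb{0} \oplus \cdots \oplus \mathbb{0} = I,
$$
using the definition $A^{\ast} = I \oplus A \oplus \cdots \oplus A^{n-1}$. Therefore $B = (D_{\mu}^{\ast} T_{\mu})^{\ast} D_{\mu}^{\ast} = I \cdot D_{\mu}^{\ast} = D_{\mu}^{\ast} = A_{\mu}^{\ast}$. Next I would simply copy the conclusion of Theorem~\ref{T-xAxxpqx} verbatim, replacing every occurrence of $B$ by $A_{\mu}^{\ast}$: the minimum equals $\mu$ and is attained exactly at $\bm{x} = A_{\mu}^{\ast}\bm{u}$ for all regular $\bm{u}$ with $\mu^{-1}\bm{p} \le \bm{u} \le \mu(\bm{q}^{-}A_{\mu}^{\ast})^{-}$.

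There is essentially no obstacle here—this corollary is a pure bookkeeping specialization, and the hardest point is only the cosmetic one of confirming that $(\mathbb{0})^{\ast} = I$ and that the hypotheses of Theorem~\ref{T-xAxxpqx} (namely that $A$ be in the normal form \eqref{E-MNF}) are automatically met when $A$ is irreducible, since an irreducible matrix is its own trivial one-block normal form with $s = 1$. One might also note, for completeness, that the hypothesis $\bm{q}$ regular is still assumed, and that $\mu = \lambda \oplus \Delta \ne \mathbb{0}$ remains well-defined in exactly the same way. No new estimates, spectral arguments, or inequality manipulations are needed; the entire content is inherited from the main theorem.
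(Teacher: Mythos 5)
Your proposal is correct and matches the paper's own reasoning: the paper likewise notes that for irreducible $A$ one has $D_{\mu}=A_{\mu}$, $T_{\mu}=\mathbb{0}$, hence $B=A_{\mu}^{\ast}$, and then reads off the statement of Theorem~\ref{T-xAxxpqx}. Your additional check that $(\mathbb{0})^{\ast}=I$ is a harmless elaboration of the same bookkeeping.
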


Specifically, when $A=\mathbb{0}$, we have $B=A_{\mu}^{\ast}=I$ and $\mu=\Delta$. The solution set is further reduced to
$$
\Delta^{-1}\bm{p}
\leq
\bm{x}
\leq
\Delta\bm{q},
$$
which coincides with that in \cite{Krivulin12Anew}.

Suppose the vector $\bm{q}$ in problem \eqref{P-xAxxpqx} is irregular. In this case, the matrix $\bm{q}^{-}B$ in the inequality
$$
\bm{q}^{-}B\bm{u}\leq\mu
$$
may be not column-regular, which prevents direct application of Lemma~\ref{L-IAxd} as in Theorem~\ref{T-xAxxpqx}.

Let $J=\mathop\mathrm{supp}(\bm{q}^{-}B)$ be the set of indices of nonzero elements in the row vector $\bm{q}^{-}B$. Denote by $(\bm{q}^{-}B)_{J}$ and $\bm{u}_{J}$ subvectors that have only components with indices from $J$. The solution of the above inequality is given by the constraints $\bm{u}_{J}\leq\mu(\bm{q}^{-}B)_{J}^{-}$ for the subvector $\bm{u}_{J}$, whereas the rest components of the vector $\bm{u}$ can take arbitrary values.

Now we can somewhat weaken conditions of Theorem~\ref{T-xAxxpqx} as follows. 
\begin{theorem}
Under the assumptions of Theorem~\ref{T-xAxxpqx}, let $\bm{q}\ne\mathbb{0}$ be arbitrary vector and $J=\mathop\mathrm{supp}(\bm{q}^{-}B)$.

Then the minimum in \eqref{P-xAxxpqx} is equal to $\mu$ and attained if and only if
$$
\bm{x}
=
B\bm{u}
$$
for all regular vectors $\bm{u}$ such that
$$
\mu^{-1}\bm{p}
\leq
\bm{u},
\quad
\bm{u}_{J}
\leq
\mu(\bm{q}^{-}B)_{J}^{-}.
$$
\end{theorem}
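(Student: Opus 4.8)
The plan is to rerun the proof of Theorem~\ref{T-xAxxpqx} almost word for word, locate the single place where regularity of $\bm{q}$ was used, and replace it by the remark on inequalities with non-column-regular matrices recorded just before the statement. First I would check that the lower bound survives. The part of the proof of Theorem~\ref{T-xAxxpqx} showing that $\lambda$ bounds the objective in \eqref{P-xAxxpqx} from below does not mention $\bm{q}$ and is reused verbatim. In the part showing that $\Delta=(\bm{q}^-\bm{p})^{1/2}$ is a lower bound, the only appeal to regularity of $\bm{q}$ is the strict inequality $r\ge\bm{q}^-\bm{x}>\mathbb{0}$ for $r=\bm{x}^-\bm{p}\oplus\bm{q}^-\bm{x}$; but for a regular vector $\bm{x}$ the scalar $\bm{q}^-\bm{x}$ is already nonzero whenever $\bm{q}\ne\mathbb{0}$, so the same manipulation (multiply by $r^{-1}\bm{x}^-$ from the right, substitute into $r\ge\bm{x}^-\bm{p}$, and conclude $r^2\ge\Delta^2$) gives $r\ge\Delta$, the inequality being vacuous when $\Delta=\mathbb{0}$. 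Hence the objective is still bounded below by $\lambda\oplus\Delta=\mu$ on $\mathbb{X}_+^n$, with $\mu\ne\mathbb{0}$ by the inherited hypotheses.

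Next I would carry over, unchanged, the reduction to the pair of inequalities. The set of regular $\bm{x}$ at which the objective equals $\mu$ again coincides with the solution set of the system \eqref{I-xAxxpmu}--\eqref{I-qxmu}. Inequality \eqref{I-xAxxpmu} is equivalent (multiply on the left by $\mu^{-1}\bm{x}$, and back by $\mu\bm{x}^-$; only $\mu\ne\mathbb{0}$ is required) to $A_\mu\bm{x}\oplus\mu^{-1}\bm{p}\le\bm{x}$, and since $\mathop\mathrm{Tr}(A_\mu)\le\mathbb{1}$ (because $\lambda\le\mu$), Theorem~\ref{T-IAxbx} yields its general regular solution $\bm{x}=B\bm{u}$ for all regular $\bm{u}$ with $\bm{u}\ge\mu^{-1}\bm{p}$. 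None of these steps uses regularity of $\bm{q}$.

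The patched step is the substitution of $\bm{x}=B\bm{u}$ into \eqref{I-qxmu}, which gives $\bm{q}^-B\bm{u}\le\mu$ with a matrix $\bm{q}^-B$ that need not be column-regular. Here I would apply the reduction described before the theorem: writing $J=\mathop\mathrm{supp}(\bm{q}^-B)$, the columns of $\bm{q}^-B$ indexed outside $J$ are zero and hence leave the corresponding components of $\bm{u}$ free, while the remaining components satisfy $(\bm{q}^-B)_J\bm{u}_J\le\mu$ with a column-regular coefficient, to which Lemma~\ref{L-IAxd} applies and returns $\bm{u}_J\le\mu(\bm{q}^-B)_J^-$. Intersecting this with the lower bound $\bm{u}\ge\mu^{-1}\bm{p}$ from the previous step yields precisely the stated family $\bm{x}=B\bm{u}$; together with the lower bound from the first step, this shows the minimum equals $\mu$ and is attained on exactly this family.

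Since the argument is almost entirely inherited from Theorem~\ref{T-xAxxpqx}, there is no genuine obstacle; the one step demanding care is the last one, keeping straight which coordinates of $\bm{u}$ remain constrained once $\bm{q}^-B$ loses column-regularity, and --- as for Theorem~\ref{T-xAxxpqx} itself --- checking that the resulting set of admissible $\bm{u}$ is nonempty, so that the value $\mu$ is genuinely attained.
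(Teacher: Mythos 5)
Your proposal is correct and follows exactly the route the paper intends: it leaves this theorem without a separate proof precisely because the argument is the proof of Theorem~\ref{T-xAxxpqx} rerun verbatim, with the single application of Lemma~\ref{L-IAxd} to $\bm{q}^{-}B\bm{u}\leq\mu$ replaced by the support-based reduction ($\bm{u}_{J}\leq\mu(\bm{q}^{-}B)_{J}^{-}$, remaining components free) described just before the statement. Your observation that regularity of $\bm{q}$ enters only through $\bm{q}^{-}\bm{x}>\mathbb{0}$, which still holds for regular $\bm{x}$ and $\bm{q}\ne\mathbb{0}$, is exactly the point that makes the lower-bound part carry over unchanged.
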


Finally note, that when $\bm{q}=\mathbb{0}$ we have $J=\emptyset$ and so the upper bound for $\bm{u}$ disappears.

\section{Conclusion}

A complete closed-form solution has been derived for a tropical extremal problem with nonlinear objective function and without constraints. The solution actually involves performing simple matrix and vector operations in terms of idempotent algebra and provides a basis for the development of efficient computational algorithms and their software implementation.

As a suggested line of further research, solutions to the problems under constraints in the form of tropical linear equalities and inequalities are to be considered. Practical examples of successful application of the results obtained are also of great interest.

\bibliography{KrivulinN}

\end{document}